\def\C{{\mathbf C}}
\def\R{{\mathbf R}}
\def\Z{{\mathbf Z}}
\def\Q{{\mathbf Q}}
\def\A{{\mathbf A}}
\newtheorem{theorem}{Theorem}[section]
\newtheorem{lemma}[theorem]{Lemma}
\newtheorem{proposition}[theorem]{Proposition}
\newtheorem{corollary}[theorem]{Corollary}
\newtheorem{claim}[theorem]{Claim}
\theoremstyle{definition}
\newtheorem{definition}[theorem]{Definition}
\theoremstyle{remark}
\newcommand{\mm}[4]{\left(\begin{smallmatrix} #1 & #2\\ #3 & #4\end{smallmatrix}\right)}
\DeclareMathOperator{\tr}{tr}
\DeclareMathOperator{\Sp}{Sp}
\DeclareMathOperator{\SL}{SL}
\DeclareMathOperator{\GL}{GL}
\DeclareMathOperator{\diag}{diag}
\begin{document}
	\title{Automatic convergence for Siegel modular forms}
	\author{Aaron Pollack}
	\address{Department of Mathematics\\ University of California San Diego\\ La Jolla, CA USA}
	\email{apollack@ucsd.edu}
	\thanks{Funding information: AP has been supported by the NSF via grant numbers 2101888 and 2144021.}
	
	\begin{abstract} Bruinier and Raum, building on work of Ibukiyama-Poor-Yuen, have studied a notion of \emph{formal Siegel modular forms}.  These objects are formal sums that have the symmetry properties of the Fourier expansion of a holomorphic Siegel modular form.  These authors proved that formal Siegel modular forms necessarily converge absolutely on the Siegel half-space, and thus are the Fourier expansion of an honest Siegel modular form.  The purpose of this note is to give a new proof of the cuspidal case of this ``automatic convergence" theorem of Bruinier-Raum.  We use the same basic ideas in a separate paper to prove an automatic convergence theorem for cuspidal quaternionic modular forms on exceptional groups.
	\end{abstract}
	
	\maketitle
	
	\setcounter{tocdepth}{1}
	\tableofcontents
	\section{Introduction} 
	In the recent preprint \cite{pollackAutConvExc}, we proved that the cuspidal quaternionic modular forms on the groups of type $F_4$ and $E_n$, $n = 6,7,8$ have an algebraic structure, defined in terms of Fourier coefficients.  One key step in the proof is to prove what can be called an \emph{automatic convergence theorem}, which asserts that certain infinite series that ``look" like the Fourier expansion of a modular form necessarily \emph{are} the Fourier expansion of a modular form.  The proof in \cite{pollackAutConvExc} is rather technical.  In this note, we illustrate the basic method of \cite{pollackAutConvExc} in the case of Siegel modular forms.  While some key pieces of the argument in \cite{pollackAutConvExc} have no analogue in the Siegel modular case, we believe that the main flavor of the argument remains.
	
	Automatic convergence theorems have a history, going back to Ibukiyama-Poor-Yuen \cite{IPY}, and then developed in work of Bruinier \cite{bruinierGenus2}, Raum \cite{raumGenus2}, Bruinier-Raum \cite{bruinerRaum2015, bruinierRaum2024} and Xia \cite{xiaUnitary}.  The automatic convergence theorem we give here is the cuspidal case of a result of \cite{bruinerRaum2015}, but the argument we give is new.
	
	\section{Fourier-Jacobi expansion of Siegel modular forms}
	In this section, we review the Fourier and Fourier-Jacobi expansions of Siegel modular forms.  

\subsection{The Fourier expansion}
	Let $\mathcal{H}_n = \{Z = X+iY \in M_n(\C): Z = Z^t, Y > 0\}$ be the Siegel upper half space of degree $n$.  The group $\Sp_{2n}(\R)$ acts on $\mathcal{H}_n$ by fractional linear transformations, $g \cdot Z = (aZ+b)(cZ+d)^{-1}$ if $g = \mm{a}{b}{c}{d}$.  Let $K_\infty$ denote the stabilizer of $i 1_n$ for this action.  Then $K_\infty \approx U(n)$, the unitary group.  If $Z \in \mathcal{H}_n$ and $g \in \Sp_{2n}(\R)$, $g = \mm{a}{b}{c}{d}$, let $J(g,Z) = cZ + d \in \GL_n(\C)$ and $j(g,Z) = \det(J(g,Z))$.  
	
	Fix an integer $\ell \geq 0$.  If $\varphi: \Sp_{2n}(\Q) \backslash \Sp_{2n}(\A) \rightarrow \C$ is an automorphic form, we say that $\varphi$ corresponds to a Siegel modular form of weight $\ell$ if, for every $g_f \in \Sp_{2n}(\A_f)$, the function $f_{\varphi,g_f}: \Sp_{2n}(\R) \rightarrow \C$ given by $f_{\varphi,g_f}(g_\infty) = j(g_\infty,i)^{\ell} \varphi(g_f g_\infty)$ descends to $\mathcal{H}_n$ and gives a holomorphic function there.
	
	Let $S_n(\Z)$, $S_n(\Q)$, etc denote the $n \times n$ symmetric matrices with integer or rational entries etc.  Let $S_n(\Z)^\vee$ denote the half-integral symmetric matrices.  If $T \in S_n(\R)$ is positive semi-definite, define $\mathcal{W}_{\ell,T}: \Sp_{2n}(\R) \rightarrow \C$ as $\mathcal{W}_{\ell,T}(g) = j(g,i)^{-\ell} e^{2\pi i (g \cdot i)}$.
	
	Suppose $\varphi$ is a cuspidal automorphic form, corresponding to a Siegel modular form of weight $\ell$.  Then $\varphi$ has a Fourier expansion
	\begin{equation}\label{eqn:SMFFE}\varphi(g_f g_\infty) = \sum_{T \in S_n(\Q), T > 0}a_T(g_f) \mathcal{W}_{\ell,T}(g_\infty).\end{equation}
	Here $a_T: \Sp_{2n}(\A_f) \rightarrow \C$ is a locally constant function, called the $T$-Fourier coefficient of $\varphi$.  Let $\psi: \Q \backslash \A \rightarrow \C^\times$ denote the standard additive character.  If $X \in S_n$, let $n(X) = \mm{1}{X}{}{1} \in \Sp_{2n}$.  The Fourier coefficients $a_T$ satisfy $a_T(n(X)g_f) = \psi((T,X))a_T(g_f)$ for every $g_f \in \Sp_{2n}(\A_f)$ and every $X \in S_n(\A_f)$.  Here $(T,X) = \frac{1}{2} \tr(TX+XT) = \tr(TX)$.
	
	\begin{definition} Suppose $T \geq 0$ and $a_T: G(\A_f) \rightarrow \C$ is a locally constant function satisfying
		\begin{enumerate}
			\item $a_T(n(X)g_f) = \psi((T,X))a_T(g_f)$ for every $g_f \in \Sp_{2n}(\A_f)$ and every $X \in S_n(\A_f)$;
			\item there exists a compact open subgroup $U$ so that $a_T$ is right-invariant by $U$.
		\end{enumerate}
		In this case, we say that $a_T$ is Siegel-Whittaker function, or $T$-Siegel Whittaker function if we need to specify the element $T$.
	\end{definition}
	
	If $r \in \GL_n$, let $m_n(r) = \diag(r, \,^tr^{-1}) \in \Sp_{2n}$.  The group $\GL_n$ acts on $S_n$ on the right as $T \cdot r = r^t T r$. The automorphy of $\varphi$ implies that $a_{T}(m_n(\gamma)g_f) = \det(\gamma)^{-\ell} a_{T \cdot \gamma}(g_f)$ for every $g_f \in \Sp_{2n}(\A_f)$ and every $\gamma \in \GL_n(\Q)$.
	
	Let $\widetilde{\Sp}_{2n}(\R)$ be the double cover of $\Sp_{2n}(\R)$.  For $g \in \widetilde{\Sp}_{2n}(\R)$, let $j_{1/2}(g,Z)$ denote the canonical squareroot of $j(\overline{g},Z)$, where $\overline{g}$ is the image of $g$ in $\Sp_{2n}(\R)$.  If $\ell \in 2^{-1} \Z$ and $g \in \widetilde{\Sp}_{2n}(\R)$, one can define $\mathcal{W}_{\ell,T}(g) = j_{1/2}(g,i)^{-2\ell} e^{2\pi i (\overline{g} \cdot i)}.$  
	
	Let now $\widetilde{\Sp}_{2n}(\A)$ denote the metaplectic double cover of $\Sp_{2n}(\A)$.  The group $\Sp_{2n}(\Q)$ splits uniquely into $\widetilde{\Sp}_{2n}(\A)$.  One can define automorphic forms on the metaplectic double cover.   We say that $\varphi$ corresponds to a Siegel modular form of weight $\ell \in 2^{-1} \Z$ if, for every $g_f \in \widetilde{\Sp}_{2n}(\A_f)$, the function $f_{\varphi,g_f}: \widetilde{\Sp}_{2n}(\R) \rightarrow \C$ given by $f_{\varphi,g_f}(g_\infty) = j_{1/2}(g_\infty,i)^{2\ell} \varphi(g_f g_\infty)$ descends to $\mathcal{H}_n$ and gives a holomorphic function there.  These half-integral weight automorphic forms again have a Fourier expansion of the form \eqref{eqn:SMFFE}.
	
\subsection{The Fourier-Jacobi expansion}
 We will recall, without proof, the Fourier-Jacobi expansion of Siegel modular forms along the Klingen parabolic subgroup $Q = M_Q N_Q$ of $\Sp_{2n}$.  Here $N_Q$ is the unipotent radical of $Q$ and $M_Q$ is its standard Levi subgroup.  Let $X = N_Q \cap M_P$, where $M_P \simeq \GL_n$ is the Levi of the Siegel parabolic subgroup.  For every nonzero rational number $t$, there is a Weil representation $\omega_{t}$ of $J_{n-1}(\A) := N_Q(\A) \rtimes \widetilde{\Sp}_{2n-2}(\A)$ on $S(X(\A))$, the Schwartz-Bruhat space of $X(\A)$.  This representation has the center $Z(N_Q)$ of $N_Q$ acting by $\psi(tz)$, for $z \in Z(N_Q)(\A)$.
 
 If $\phi \in S(X(\A_f))$, $T' \in S_{n-1}(\Q)$, $a_{\diag(t,T')}$ is a $T=\diag(t,T')$ Siegel Whittaker function, $g_f \in \Sp_{2n}(\A_f)$ and $r_f \in \widetilde{\Sp}_{2n-2}(\A_f)$, define
 \[ a_{t,T'}(r_f,g_f;\phi) = \int_{X(\A_f)}{a_{\diag(t,T')}(x\overline{r_f} g_f) (\omega_{-t}(r_f)\phi)(x)\,dx}\]
 where $\Sp_{2n-2}$ is embedded in $\Sp_{2n}$ inside of $M_Q$.
 
 \begin{proposition}\label{prop:Qsymm} Suppose $\varphi$ corresponds to a cuspidal Siegel modular form of weight $\ell$, $g_f \in \Sp_{2n}(\A_f)$, and $\phi \in S(X(\A_f))$.  If $t > 0$, then the $a_{t,T'}(r_f,g_f;\phi)$ are the Fourier coefficients of a cuspidal weight $\ell-\frac{1}{2}$ modular form on $\widetilde{\Sp}_{2n-2}(\A)$ as $T'$ varies over the positive-definite elements of $S_{n-1}(\Q)$.
 \end{proposition}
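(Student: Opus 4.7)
The plan is to construct a candidate modular form $\varphi'$ on $\widetilde{\Sp}_{2n-2}(\A)$, of weight $\ell-\tfrac{1}{2}$, whose $T'$-th Fourier coefficient coincides with $a_{t,T'}(\cdot, g_f; \phi)$. The construction goes through the $t$-th Fourier--Jacobi coefficient of $\varphi$, followed by the theta decomposition furnished by the Weil representation $\omega_t$.

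First form the $t$-th Fourier--Jacobi coefficient
\[ \varphi_t(g) = \int_{Z(N_Q)(\Q) \backslash Z(N_Q)(\A)} \varphi(zg)\,\psi(-tz)\,dz, \]
which is left-$J_{n-1}(\Q)$-invariant and transforms by $\psi_t$ under $Z(N_Q)(\A)$.  Let $\phi_\infty$ be the archimedean Gaussian in the Schr\"odinger model $S(X(\R))$ of $\omega_t$ at the real place, normalized to be an eigenvector of weight $\tfrac{1}{2}$ for the archimedean maximal compact of $\widetilde{\Sp}_{2n-2}(\R)$.  Using the global theta kernel
\[ \theta_{\phi_\infty \otimes \phi}(j) = \sum_{\xi \in X(\Q)} \bigl(\omega_t(j)(\phi_\infty \otimes \phi)\bigr)(\xi), \qquad j \in J_{n-1}(\A), \]
define the candidate
\[ \varphi'(r) = \int_{N_Q(\Q) \backslash N_Q(\A)} \varphi_t(n r g_f)\,\overline{\theta_{\phi_\infty \otimes \phi}(nr)}\,dn, \qquad r \in \widetilde{\Sp}_{2n-2}(\A). \]
Left-invariance of $\varphi'$ under $\Sp_{2n-2}(\Q)$ will follow from the automorphy of $\varphi_t$ on $J_{n-1}(\Q) \backslash J_{n-1}(\A)$ combined with that of the theta kernel, using the canonical splitting $\Sp_{2n-2}(\Q) \hookrightarrow \widetilde{\Sp}_{2n-2}(\A)$.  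The weight $\ell - \tfrac{1}{2}$ will emerge from pairing the weight $\ell$ transformation of $\varphi$ at infinity with the weight $-\tfrac{1}{2}$ transformation of $\overline{\phi_\infty}$.  Cuspidality of $\varphi'$ will then be immediate from its Fourier expansion: every Fourier coefficient originates from a $T > 0$ term in \eqref{eqn:SMFFE}, which forces the Schur complement $T'$ to be positive-definite as well.

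The main computational step, and the place where I expect the bulk of the work to concentrate, is to verify that the $T'$-th Fourier coefficient of $\varphi'$ is exactly $a_{t,T'}(r_f, g_f; \phi)$.  For this one substitutes the expansion \eqref{eqn:SMFFE} into the integral defining $\varphi'$ and unfolds.  A general $T > 0$ with top-left entry $t$ can be written uniquely as $\gamma^t \diag(t, T') \gamma$ for some $\gamma \in X(\Q)$ (the unipotent corresponding to a shift) and some positive-definite $T' \in S_{n-1}(\Q)$; the automorphy $a_{T}(m_n(\gamma)g_f) = a_{\gamma^t T \gamma}(g_f)$ under unipotent elements of $M_P$ then converts the sum over such $T$ into a sum over $\gamma \in X(\Q)$ of coefficients at $\diag(t,T')$.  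The sum over $X(\Q)$ is in turn consumed by the theta kernel: in the Schr\"odinger model, pairing against $\overline{\theta_{\phi_\infty \otimes \phi}}$ over $N_Q(\Q) \backslash N_Q(\A)$ produces an integral over $X(\A_f)$ against $(\omega_{-t}(r_f)\phi)(x)$, exactly as in the formula for $a_{t,T'}$.  The principal obstacle should be the careful bookkeeping of the metaplectic cocycle and the verification that the contribution of the archimedean Gaussian in this unfolding is precisely $\mathcal{W}_{\ell-\tfrac{1}{2}, T'}(r_\infty)$.
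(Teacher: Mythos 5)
Your proposal is correct and takes essentially the same approach as the paper: the paper likewise forms the theta kernel $\Theta_\phi(nr)=\sum_{\xi\in X(\Q)}\omega_{-t}(nr)(\phi\otimes\phi_\infty)(\xi)$ with an archimedean Gaussian, integrates $\varphi$ against it over $[N_Q]$ (your preliminary extraction of $\varphi_t$ and use of $\overline{\omega_t}$ in place of $\omega_{-t}$ amount to the same integral), and identifies the resulting weight $\ell-\tfrac{1}{2}$ form's Fourier coefficients with the $a_{t,T'}(r_f,g_f;\phi)$ by the unfolding you describe. The paper states this computation without carrying it out, so your sketch is at the same level of detail as the source.
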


The proof of the proposition is to first define a theta function $\Theta_\phi(nr)$ on $J_{n-1}(\A)$, 
\[\Theta_\phi(nr) = \sum_{\xi \in X(\Q)}{ \omega_{-t}(nr)(\phi \otimes \phi_\infty)(\xi)}.\]
Here $\phi_\infty$ is an appropriately chosen Gaussian on $X(\R)$.  Then, one obtains an automorphic form on $\widetilde{\Sp}_{2n-2}$ as
\[\mathrm{FJ}_\phi(r_f,g_f) = \int_{[N_Q]}{\Theta_{\phi}(nr)\varphi(nrg_f)\,dn}.\]
One calculates the Fourier expansion of this automorphic form, and finds that it corresponds to a holomorphic modular form of weight $\ell - \frac{1}{2}$ and has Fourier coefficients the $a_{t,T'}(r_f,g_f;\phi)$.

\begin{definition} Suppose $\{a_T\}_{T}$ are a collection of Siegel-Whittaker functions.  We say that the collection satisfies the \textbf{$P$-symmetries} if $a_{T}(m_n(\gamma)g_f) = \det(\gamma)^{-\ell} a_{T \cdot \gamma}(g_f)$ for every $g_f \in \Sp_{2n}(\A_f)$ and every $\gamma \in \GL_n(\Q)$.  We say the collection satisfies the \textbf{$Q$-symmetries} if the conclusion of Proposition \ref{prop:Qsymm} holds.  That is, for every $t > 0$ and $\phi \in S(X(\A_f))$, the $a_{t,T'}(r_f,g_f;\phi)$ are the Fourier coefficients of a cuspidal weight $\ell-\frac{1}{2}$ modular form on $\widetilde{\Sp}_{2n-2}(\A)$ as $T'$ varies over the positive-definite elements of $S_{n-1}(\Q)$.
\end{definition}

\section{Converse theorem}
If $\varphi$ is corresponds to a cuspidal Siegel modular form on $\Sp_{2n}$ of weight $\ell$, then its Fourier coefficients $a_T$ satisfy the $P$ and $Q$ symmetries.  Additionally, the Fourier coefficients are uniformly smooth, in the sense there is an open compact subgroup $U$ of $\Sp_{2n}(\A_f)$ so that the $a_T$ are right $U$ invariant for every $T$.  Finally, the $a_T(g_f)$ grow polynomially in the norm $||T|| = (T,T)^{1/2}$ for every $g_f \in \Sp_{2n}(\A_f)$.  (In fact, they satisfy more specific, tighter bounds.)

The converse is also true.
\begin{proposition} Suppose the $\{a_T\}_T$ are a collection of Siegel Whittaker functions that satisfy the $P$ and $Q$ symmeties, are uniformly smooth, and grow polynomially with $||T||$.  Then the sum
	\[\Psi(g_fg_\infty) = \sum_{T \in S_n(\Q), T>0} a_T(g_f) W_{\ell,T}(g_\infty)\]
converges absolutely.  It defines a cuspidal automorphic form on $\Sp_{2n}$ that corresponds to a Siegel modular form of weight $\ell$.
\end{proposition}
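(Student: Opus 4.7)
The plan is to combine a direct convergence estimate with an induction on $n$ that uses the Fourier-Jacobi expansion along the Klingen parabolic. For \textbf{absolute convergence}, note that for any fixed $Y > 0$ there is $c_Y > 0$ with $\tr(YT) \geq c_Y\|T\|$ for every positive semi-definite $T$, so $|\mathcal{W}_{\ell,T}(g_\infty)| \leq |j(g_\infty,i)|^{-\ell} e^{-2\pi c_Y\|T\|}$. Combined with the polynomial bound on $|a_T(g_f)|$ and the polynomial growth in $R$ of the number of half-integral $T > 0$ with $\|T\| \leq R$, the series converges absolutely and locally uniformly on $\Sp_{2n}(\A_f) \times \mathcal{H}_n$. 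This yields smoothness of $\Psi$ on $\Sp_{2n}(\R)$, the correct $K_\infty$-transformation, holomorphy of the descent to $\mathcal{H}_n$, and (by uniform smoothness) right invariance under an open compact subgroup of $\Sp_{2n}(\A_f)$. Cuspidality is automatic since the sum only runs over $T > 0$.

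The substantive task is to prove $\Psi(\gamma g) = \Psi(g)$ for all $\gamma \in \Sp_{2n}(\Q)$. Since the Weyl groups $W_P$ and $W_Q$ together contain every simple reflection of $\Sp_{2n}$, $\Sp_{2n}(\Q)$ is generated by $P(\Q)$ and $Q(\Q)$, so it suffices to check invariance under these two parabolics. Invariance under $N_P(\Q)$ follows from symmetry (1) in the definition of a Siegel-Whittaker function together with $\psi|_\Q = 1$; invariance under $M_P(\Q) = \GL_n(\Q)$ follows from the $P$-symmetry, the identity $\mathcal{W}_{\ell,T}(m_n(\gamma)g_\infty) = \det(\gamma)^{\ell}\,\mathcal{W}_{\ell,T\cdot\gamma}(g_\infty)$, and reindexing $T \mapsto T\cdot\gamma$. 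Because $M_Q \simeq \GL_1 \times \Sp_{2n-2}$ with the $\GL_1$ factor already in $M_P$, what remains is invariance under $N_Q(\Q)$ and the embedded $\Sp_{2n-2}(\Q)$.

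For this last invariance, partition the Fourier expansion by $t := T_{11} \in \Q_{>0}$ (positivity of $T$ forces $t > 0$): $\Psi = \sum_{t>0} \Psi_t$, and I claim each $\Psi_t$ is a Jacobi form of index $t$. Pairing $\Psi_t$ against the theta kernel $\Theta_\phi$ of Proposition~\ref{prop:Qsymm} as $\phi \in S(X(\A_f))$ varies, a direct computation mirroring the construction immediately below that proposition identifies the outcome on $\widetilde{\Sp}_{2n-2}$ as a weight-$(\ell - \frac{1}{2})$ candidate whose $T'$-th Fourier coefficient is $a_{t,T'}(r_f,g_f;\phi)$. The $Q$-symmetry, applied inductively to the metaplectic analogue of the proposition at rank $n-1$, says this candidate is a genuine cuspidal modular form on $\widetilde{\Sp}_{2n-2}$, and in particular is $\widetilde{\Sp}_{2n-2}(\Q)$-invariant. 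Letting $\phi$ range over a spanning set and inverting the theta decomposition then recovers $\Psi_t$ along with the desired $N_Q(\Q) \rtimes \Sp_{2n-2}(\Q)$-invariance.

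\textbf{The main obstacle} is this last step: setting up the theta inversion cleanly and transferring automorphy from $\widetilde{\Sp}_{2n-2}$ back to $\Psi_t$. It requires care with the Weil representation on $S(X(\A))$, the archimedean Gaussian $\phi_\infty$, and the splitting of the metaplectic cover over $\Sp_{2n-2}(\Q)$; the cleanest formulation proves the proposition simultaneously, by induction on $n$, for $\Sp_{2n}$ with $\ell \in \Z$ and for $\widetilde{\Sp}_{2n}$ with $\ell \in \frac{1}{2} + \Z$, so that the inductive step has access to both the integer- and half-integer-weight cases at rank $n-1$.
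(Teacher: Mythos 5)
Your overall strategy --- absolute convergence from the polynomial bound, $P(\Q)$-invariance from the $P$-symmetries, $Q$-invariance via a theta decomposition of the pieces $\Psi_t$, and generation of $\Sp_{2n}(\Q)$ by the two parabolics --- is the paper's strategy. One minor point first: the induction on $n$ is unnecessary. The $Q$-symmetries are \emph{defined} as the assertion that the $a_{t,T'}(r_f,g_f;\phi)$ already are the Fourier coefficients of an honest cuspidal weight-$(\ell-\tfrac{1}{2})$ form on $\widetilde{\Sp}_{2n-2}(\A)$; you do not need to re-prove a converse theorem at rank $n-1$ to get $\widetilde{\Sp}_{2n-2}(\Q)$-invariance of the forms $\Psi_{t,\phi}$, and setting up such an induction would obligate you to also establish polynomial growth of the rank-$(n-1)$ coefficients, an extra step the hypothesis spares you.

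The substantive gap is in the step you flag as the main obstacle, and it is not merely a matter of ``setting up the theta inversion cleanly.'' The identity $\Psi_t(nrg_f)=\sum_\alpha \Psi_{t,\phi_\alpha}(r;g_f)\,\Theta_{\phi_\alpha^\vee}(nr)$ is an identity of functions of $(n,r)\in N_Q(\A)\times \widetilde{\Sp}_{2n-2}(\A)$ (times $K_\infty$). At the archimedean place the set $N_Q(\R)\Sp_{2n-2}(\R)K_\infty = Q^1(\R)K_\infty$ has positive real codimension in $\Sp_{2n}(\R)$ (its image $Q^1(\R)\cdot i1_n$ is a real hypersurface in $\mathcal{H}_n$). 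So theta inversion only yields left $Q^1(\Q)$-invariance of $\Psi$ on $\Sp_{2n}(\A_f)\times\bigl(Q^1(\R)K_\infty\bigr)$, not on all of $\Sp_{2n}(\A)$; in particular it does not by itself give invariance under the embedded $\Sp_{2n-2}(\Q)$ at a general archimedean point, which is what your generation argument requires. The missing idea is to exploit holomorphy: for $\gamma\in Q^1(\Q)$, the holomorphic functions on $\mathcal{H}_n$ attached to $\Psi$ and to its $\gamma$-translate agree on the real hypersurface $Q^1(\R)\cdot i1_n$, and the identity theorem for holomorphic functions then forces them to agree on all of $\mathcal{H}_n$. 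This is exactly how the paper closes the argument; without it (or some substitute), the transfer of automorphy from $\widetilde{\Sp}_{2n-2}$ back to $\Psi$ is incomplete.
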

\begin{proof}[Proof sketch]
One can show that the sum converges absolutely by the fact that the $a_T$ grow polynomially.  The result is a smooth, moderate growth, $\mathcal{Z}(\mathfrak{sp}_{2n})$-finite function on $\Sp_{2n}(\A)$.  The fact that the $a_{T}$ satisfies the $P$ symmetries implies that $\Psi(g)$ is left $P(\Q)$-invariant.  

Let $Q^1$ denote the derived group of $Q$.   The fact that the $a_{T}$ satisfies the $Q$ symmetries implies that $\Psi(g)$ is left $Q^1(\Q)$-invariant on $\Sp_{2n}(\A_f) \times (Q^1(\R) K_\infty)$.  Indeed, suppose $t > 0$.  Let $\Psi_t(g) = \int_{[Z(N_Q)]}{\psi(-tz)\Psi(zg)\,dz}$.  Then one can reconstruct $\Psi_{t}(g)$ from the $a_{t,T'}(r_f,g_f,\phi)$ and theta functions.  Specifically, set
\[\Psi_{t,\phi}(r;g_f) = \sum_{T' \in S_{n-1}(\Q), T' > 0}{a_{t,T'}(r_f,g_f;\phi) \mathcal{W}_{\ell-\frac{1}{2},T'}(r_\infty)}\]
which is a cuspidal automorphic form on $\widetilde{\Sp}_{2n-2}(\A)$ by the fact that the $a_{T}$ satisfy the $Q$-symmetries.  Then
\[\Psi_t(nr g_f ) = \sum_{\alpha}{\Psi_{t,\phi_\alpha}(r;g_f) \Theta_{\phi_\alpha^\vee}(nr)}.\]
Here $\phi_\alpha$ is a basis of $S(X(\A_f))$ and $\phi_\alpha^\vee$ is the dual basis.  This expansion shows that $\Psi_t(qkg_f)$ is left-invariant under $Q^1(\Q)$ for every $t \in \Q^\times_{>0}$, $q \in Q^1(\Q)$, $g_f \in \Sp_{2n}(\A_f)$ and $k  \in K_\infty$.  As $\Psi = \sum_t \Psi_t$, one obtains that $\Psi$ is left $Q^1(\Q)$-invariant on $\Sp_{2n}(\A_f) \times (Q^1(\R) K_\infty)$. 

But $\Psi$ corresponds to a holomorphic function on $\mathcal{H}_n$, so by the identity theorem for holomorphic functions, one can check that $\Psi$ is left $Q^1(\Q)$-invariant on all of $\Sp_{2n}(\A)$.  As $\Psi$ is both left invariant for $P(\Q)$ and $Q^1(\Q)$, it is left $\Sp_{2n}(\Q)$-invariant.
\end{proof}

The aim of the rest of note is prove that assumption that the $a_T$ grow polynomially with $T$ is unnecessary.
\begin{theorem}[Automatic convergence for Siegel modular forms] \label{thm:autConv} Suppose the $\{a_T\}_T$ are a collection of Siegel Whittaker functions that satisfy the $P$ and $Q$ symmeties, and are uniformly smooth.  Then the $a_T$ grow polynomially with $T$.  Consequently, every such collection is the set of Fourier coefficients of an honest cuspidal Siegel modular form of weight $\ell$.
\end{theorem}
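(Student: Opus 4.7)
\emph{Proof plan.} The strategy is to combine the $P$- and $Q$-symmetries: the $P$-symmetries reduce a general positive definite $T$ to the block-diagonal form $T = \diag(t, T')$, and the $Q$-symmetries, by identifying the Fourier-Jacobi coefficients as genuine Fourier coefficients of cuspidal modular forms on $\widetilde{\Sp}_{2n-2}$, supply Hecke-type bounds on $a_{\diag(t,T')}$ via a Fourier-Jacobi inversion.

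First, I use the $P$-symmetry together with reduction theory for positive definite symmetric rational matrices. Starting from any positive definite $T \in S_n(\Q)$, Minkowski reduction (within a congruence subgroup of $\GL_n(\Z)$ preserving the fixed level $U$) followed by ``completing the square'' in the first coordinate produces $\gamma \in \GL_n(\Q)$ with $\gamma^t T \gamma = \diag(t,T')$, $t > 0$, $T' > 0$, and with $|\det\gamma|$, $t$, and $\|T'\|$ all polynomially bounded in $\|T\|$. The $P$-symmetry
$$a_T(g_f) = (\det\gamma)^\ell\, a_{\diag(t,T')}\bigl(m_n(\gamma^{-1})g_f\bigr)$$
then reduces the problem to bounding $|a_{\diag(t,T')}(g_f')|$ polynomially in $\max(t, \|T'\|)$, with $g_f'$ in a fixed compact subset of $\Sp_{2n}(\A_f)$ (using the uniform smoothness).

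Next, I apply the $Q$-symmetries. For each $t > 0$ and $\phi \in S(X(\A_f))$, the formal series
$$\Psi_{t,\phi}(r;g_f) = \sum_{T' > 0} a_{t,T'}(r_f, g_f; \phi)\, \mathcal{W}_{\ell - \frac{1}{2}, T'}(r_\infty)$$
is, by hypothesis, a genuine cuspidal Siegel modular form of weight $\ell - \frac{1}{2}$ on $\widetilde{\Sp}_{2n-2}$, so its Fourier coefficients satisfy the classical Hecke bound $|a_{t,T'}(r_f,g_f;\phi)| \leq C(t,g_f,\phi)\det(T')^{A}$ for some universal exponent $A = A(\ell,n)$. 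I then invert the defining Fourier-Jacobi integral $a_{t,T'}(1,g_f;\phi) = \int_{X(\A_f)} a_{\diag(t,T')}(xg_f)\phi(x)\,dx$, using a finite basis of $U$-invariant test functions $\phi$ whose supports are determined by $U$, to express $a_{\diag(t,T')}(g_f)$ as a bounded linear combination of the $a_{t,T'}(1,g_f;\phi)$'s. Combining gives $|a_{\diag(t,T')}(g_f)| \ll C(t,g_f)\,\det(T')^A$.

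The main obstacle is controlling $C(t,g_f,\phi)$ polynomially in $t$. Uniform smoothness forces $\Psi_{t,\phi}$ into a finite-dimensional space of cuspidal modular forms on $\widetilde{\Sp}_{2n-2}$ whose level and dimension grow polynomially in $t$. The plan is to bound the Hecke constant $C$ by the sup norm of $\Psi_{t,\phi}$ on a fundamental domain, then to bound that sup norm by evaluations at a polynomially bounded set of test points, and finally to close the bootstrap loop by invoking additional $P$-symmetries---for instance, $\GL_n(\Z)$-permutations of the diagonal entries of $\diag(t,T')$---to trade the ``large $t$, small $T'$'' regime against ``small $t$, large $T'$'', with the base case being the manifestly finite collection of $a_T(g_f)$ for $\|T\|$ bounded. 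Executing this bootstrap carefully is, I anticipate, the technical heart of the argument, in analogy with the corresponding step of \cite{pollackAutConvExc}.
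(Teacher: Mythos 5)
Your opening moves match the paper's: reduce via the $P$-symmetries and Minkowski reduction to $T$ with small first entry $t$, view the $Q$-symmetries as saying the Fourier--Jacobi data assemble into genuine cusp forms $\Psi_{t,\phi}$ of weight $\ell-\tfrac12$ on $\widetilde{\Sp}_{2n-2}$, and recover $a_{\diag(t,T')}$ from the $a_{t,T'}(\cdot,\cdot;\phi)$ by the $L^2$-duality of Lemmas \ref{lem:volBound1}--\ref{lem:abarfromFJ}. But the step you defer as ``the technical heart'' --- controlling the Hecke constant $C(t,g_f,\phi)$ polynomially in $t$ --- is exactly where the content of the theorem lies, and the two mechanisms you propose for it do not work. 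First, the sup norm of a cusp form is not bounded by its values at ``a polynomially bounded set of test points''; no finite set of evaluations controls a sup norm. Second, the suggested trade via $\GL_n(\Z)$-permutations of the diagonal entries of $\diag(t,T')$ makes no progress: permutations preserve $\det(T)$, so the ``large $t$, small $T'$'' and ``small $t$, large $T'$'' regimes are not genuinely exchanged against a decreasing quantity, and the bootstrap has no reason to terminate at the base case of bounded $\|T\|$.

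The paper's resolution is the \emph{quantitative Sturm bound}: for a cusp form whose Fourier support lies in $M^{-1}S_n(\Z)^\vee$, the sup norm (attained on a Siegel set, where the Fourier expansion splits into a head and a tail with the tail contributing at most half the maximum) is bounded by $2e^{2\pi n}D_{\ell,n,\epsilon_n}M^\alpha$ times the maximum of the normalized coefficients $\beta_{T'}$ with $\det(T')\leq R$, where $R^{1/(n-1)}\sim\log M$ and $M\sim t$. This is combined with an induction on $\det(T)$ over the exponentially growing ranges $[D_0^{\delta^{n-1}},D_0^{\delta^n}]$ with $1<\delta<n$: Minkowski reduction gives $t=T_{11}\leq C_n\det(T)^{\delta/n}<D_1^{1-\epsilon}$, so every $T'$ in the Sturm range $\det(T')\lesssim(\log t)^{n-1}$ satisfies $t\det(T')<D_1$ and is covered by the inductive hypothesis; the Sturm bound then propagates the bound $f(D_1)$ (up to factors polynomial in $t$, hence in $\det(T)$) to \emph{all} coefficients of $\Psi_{t,\phi}$, in particular to the target $T$ in the next range. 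Without this quantitative Sturm input --- in particular without the explicit, merely polynomial dependence on the level $M$ and the merely poly-logarithmic size of the required initial segment of coefficients --- your bootstrap is circular: the Hecke bound's constant is the sup norm, which is what you are trying to control. So the proposal identifies the right reductions but is missing the essential idea.
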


The proof of Theorem \ref{thm:autConv} combines some reduction theory with a ``quantitative Sturm bound".  
\section{Reduction theory and the Quantitative Sturm bound}
In this section, we review some reduction theory and prove the quantitative Sturm bound.

\subsection{Reduction theory}
There are two results we will need from reduction theory.  The first involves Minkowski reduction theory for $\GL_n$.  It is standard.  See, e.g., \cite[equation (1.22)]{andrianovBook}.
\begin{theorem}\label{thm:minkowski1} There is a positive constant $C_n$ with the following property: Suppose $T \in S_n(\R)$ is positive-definite.  Then there is $\gamma \in \GL_n(\Z)$ so that $T \cdot \gamma$ has $(11)$ entry at most $C_n \det(T)^{1/n}$.
\end{theorem}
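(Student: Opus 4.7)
The plan is to derive Theorem \ref{thm:minkowski1} from Minkowski's first theorem on symmetric convex bodies. The key reformulation is that, using the right action $T \cdot \gamma = \gamma^t T \gamma$ from the paper, one has $(T \cdot \gamma)_{11} = (\gamma^t T \gamma)_{11} = v^t T v$ where $v = \gamma e_1$ is the first column of $\gamma$. Thus the conclusion is equivalent to producing a primitive vector $v \in \Z^n$ (i.e., one whose entries have $\gcd$ equal to $1$, which is exactly the condition that $v$ can appear as the first column of some $\gamma \in \GL_n(\Z)$) with $v^t T v \leq C_n \det(T)^{1/n}$.

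To produce such a vector, I would consider the symmetric convex ellipsoid $E_R = \{v \in \R^n : v^t T v \leq R\}$, whose Lebesgue volume is $\omega_n R^{n/2} \det(T)^{-1/2}$, where $\omega_n$ denotes the volume of the Euclidean unit ball in $\R^n$. Minkowski's first theorem guarantees that a symmetric convex body of volume at least $2^n$ contains a nonzero point of $\Z^n$. Setting $R = (2^n / \omega_n)^{2/n} \det(T)^{1/n}$ and using compactness of $E_R$ to absorb a small perturbation, one extracts a nonzero $v_0 \in \Z^n$ satisfying $v_0^t T v_0 \leq R$. Hence one may take $C_n = (2^n/\omega_n)^{2/n}$, which recovers Hermite's classical estimate (with the sharp constant being Hermite's constant $\gamma_n$).

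Next, I would ensure that $v_0$ can be chosen primitive. Indeed, if $v_0 = k w$ with $w \in \Z^n$ and integer $k \geq 2$, then $w^t T w = k^{-2} v_0^t T v_0 < v_0^t T v_0$, so we may replace $v_0$ by $w$. Equivalently, any minimizer of $v^t T v$ over $\Z^n \setminus \{0\}$ is automatically primitive. Having arranged this, we invoke the standard fact that a primitive vector in $\Z^n$ extends to a $\Z$-basis $(v_0, v_1, \ldots, v_{n-1})$ of $\Z^n$; letting $\gamma \in \GL_n(\Z)$ be the matrix with these as its columns gives $(T \cdot \gamma)_{11} = v_0^t T v_0 \leq C_n \det(T)^{1/n}$, as required.

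The main obstacle here is essentially nonexistent: this is the classical Hermite--Minkowski bound, and every step is well known. The only points that require even minor care are verifying the convention (that the right action $T \cdot \gamma = \gamma^t T \gamma$ indeed makes the $(1,1)$-entry depend on the first \emph{column} of $\gamma$ rather than the first row) and the primitivity reduction, both of which are immediate.
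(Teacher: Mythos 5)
Your proof is correct. Note that the paper does not actually prove this theorem — it cites it as standard from Andrianov's book — so there is no in-paper argument to compare against; your argument is the classical Hermite–Minkowski one (bound the minimum of $v \mapsto v^t T v$ on $\Z^n\setminus\{0\}$ via Minkowski's convex body theorem, observe a minimizer is primitive, and complete it to a basis of $\Z^n$), and all the steps, including the identification $(T\cdot\gamma)_{11}=(\gamma e_1)^t T(\gamma e_1)$ and the handling of the boundary case in Minkowski's theorem by compactness, are sound.
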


The second involves a Siegel set for the action of $\Sp_{2n}(\Z)$ on the Siegel upper half-plane.  One can see, e.g., \cite[Theorem 1.16]{andrianovBook}.
\begin{theorem}\label{thm:Siegel1} There is a positive constant $\epsilon_n$ with the following property: Suppose $Z \in \mathcal{H}_n$. Then there is $\gamma \in \Sp_{2n}(\Z)$ so that $Im(\gamma \cdot Z) > \epsilon_n 1_n$.
\end{theorem}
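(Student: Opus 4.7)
The plan is to put $Z$ into a version of the Siegel fundamental domain by an $\Sp_{2n}(\Z)$-transformation, and then read off the lower bound on the imaginary part from the resulting normal form. Using the transformation law $\det \text{Im}(\gamma \cdot Z) = |j(\gamma, Z)|^{-2} \det \text{Im}(Z)$, I would first choose $\gamma_0 \in \Sp_{2n}(\Z)$ that maximizes $\det \text{Im}(\gamma_0 \cdot Z)$ in the orbit, equivalently that minimizes $|j(\gamma_0, Z)|$. Such a minimizer exists by the standard finiteness argument: $|j(\gamma, Z)|$ depends only on the bottom-row pair $(c,d)$ of $\gamma$, and for fixed $Z$ the identity $|\det(cZ+d)|^2 = \det Y \cdot \det((cZ+d) Y^{-1} (cZ+d)^\dagger)$ shows that $|\det(cZ+d)|^2$ is controlled from below by a positive definite real quadratic form in the entries of $(c,d)$. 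Thus only finitely many integer pairs $(c,d)$ arising from $\Sp_{2n}(\Z)$ give $|j(\gamma,Z)|$ below any bound, and a minimizer is attained. After replacing $Z$ by $\gamma_0 \cdot Z$, we gain the key inequality $|j(\gamma, Z)| \geq 1$ for every $\gamma \in \Sp_{2n}(\Z)$.

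Next, I would refine using Levi and unipotent elements that do not disturb the maximality. Since $m_n(r) \cdot Z = r Z r^t$ and $|j(m_n(r), Z)| = 1$ for $r \in \GL_n(\Z)$, iterated application of Theorem \ref{thm:minkowski1} (i.e., full Minkowski reduction for positive definite forms) lets me choose $r$ so that $Y = \text{Im}(r Z r^t)$ is Minkowski reduced. Translating by $n(S)$ with $S \in S_n(\Z)$ then arranges $|x_{ij}| \leq \tfrac{1}{2}$ for all entries of $X = \text{Re}(Z)$ without affecting $Y$ or the inequality $|j(\gamma, Z)| \geq 1$. The decisive step is to apply this inequality to the coordinate-inversion elements
\[ \gamma_k = \begin{pmatrix} I_n - E_{kk} & -E_{kk} \\ E_{kk} & I_n - E_{kk} \end{pmatrix} \in \Sp_{2n}(\Z), \qquad k = 1, \ldots, n, \]
where $E_{kk}$ has a $1$ in position $(k,k)$ and zeros elsewhere. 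Since $E_{kk} Z + (I_n - E_{kk})$ is the identity matrix with its $k$-th row replaced by the $k$-th row of $Z$, expanding the determinant along the other rows gives $j(\gamma_k, Z) = z_{kk}$. Combined with $|x_{kk}| \leq \tfrac{1}{2}$, the inequality $|z_{kk}| \geq 1$ forces $y_{kk} \geq \tfrac{\sqrt{3}}{2}$ for each $k$.

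Finally, to upgrade this diagonal lower bound to one on the smallest eigenvalue of $Y$, I would invoke the standard fact from Minkowski reduction theory that for a Minkowski reduced positive definite matrix $Y$, there is a constant $c_n > 0$ depending only on $n$ such that $Y \geq c_n \diag(y_{11}, \ldots, y_{nn})$ in the positive-definite order. This yields $Y \geq (c_n \sqrt{3}/2) I_n =: \epsilon_n I_n$, as required. The main subtlety is verifying the existence of the global minimizer in the very first step; the essential new observation (beyond just performing Minkowski reduction on $Y$) is the identification $j(\gamma_k, Z) = z_{kk}$, which converts a single determinantal maximality statement into an entry-wise lower bound on the diagonal of $Y$.
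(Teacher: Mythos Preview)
Your argument is correct and is essentially the classical Siegel reduction proof. Note, however, that the paper does not supply its own proof of this theorem: it simply quotes it as a known input from reduction theory, citing \cite[Theorem 1.16]{andrianovBook}. So there is nothing in the paper to compare your argument against beyond the reference; what you have written is precisely the standard proof one finds in that source (maximize $\det\mathrm{Im}$ over the orbit, Minkowski-reduce $Y$ via $\GL_n(\Z)$, center $X$, use the partial inversions $\gamma_k$ to force $|z_{kk}|\geq 1$, then invoke $Y \geq c_n\,\diag(y_{11},\ldots,y_{nn})$ for Minkowski-reduced $Y$).

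One small remark: you appeal to Theorem~\ref{thm:minkowski1} for full Minkowski reduction, but as stated in the paper that theorem only gives a bound on the $(1,1)$ entry. You clearly intend (and need) the full Minkowski reduction theorem together with the comparison $Y \geq c_n\,\diag(y_{ii})$; both are standard and are in the same reference, so this is a labeling issue rather than a mathematical gap.
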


Set $\mathcal{S}(\epsilon_n) = \{g \in \Sp_{2n}(\R): Im(g \cdot i) > \epsilon_n 1_n\}$, and let $\widetilde{\mathcal{S}}(\epsilon_n)$ denote the inverse image of $\mathcal{S}(\epsilon_n)$ in $\widetilde{\Sp}_{2n}(\R)$.  Let $K_f = \prod_{p < \infty}{\Sp_{2n}(\Z_p)}$.  Let $\widetilde{K}_f$ be the inverse image of $K_f$ in $\widetilde{Sp}_{2n}(\A_f)$.
\begin{corollary}\label{cor:adelicRedTh} Suppose $g \in \Sp_{2n}(\A)$.  Then there is $\gamma \in \Sp_{2n}(\Q)$ so that $\gamma g \in \mathcal{S}(\epsilon_n) K_f$.  Likewise, if $g \in \widetilde{\Sp}_{2n}(\A)$, then there is $\gamma \in \Sp_{2n}(\Q)$ so that $\gamma g \in \widetilde{\mathcal{S}}(\epsilon_n) \widetilde{K}_f$.
\end{corollary}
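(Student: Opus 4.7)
The plan is to combine strong approximation for $\Sp_{2n}$ with the archimedean Siegel-set reduction in Theorem \ref{thm:Siegel1}. Roughly, I would first use strong approximation to bring the finite component into $K_f$ at the cost of multiplication by a rational element, and then apply Theorem \ref{thm:Siegel1} to move the archimedean component into $\mathcal{S}(\epsilon_n)$ by an element of $\Sp_{2n}(\Z)$. Because $\Sp_{2n}(\Z) \subseteq K_f$ under the diagonal embedding $\Z \hookrightarrow \prod_p \Z_p$, the second step does not undo the first.

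Concretely, given $g = (g_\infty, g_f) \in \Sp_{2n}(\A)$, strong approximation for the simply connected semisimple group $\Sp_{2n}$ (valid because $\Sp_{2n}(\R)$ is noncompact) gives $\Sp_{2n}(\A_f) = \Sp_{2n}(\Q) K_f$; write $g_f = \gamma_1 k_f$ with $\gamma_1 \in \Sp_{2n}(\Q)$ and $k_f \in K_f$. Then $\gamma_1^{-1} g$ has finite component $k_f \in K_f$. Next apply Theorem \ref{thm:Siegel1} to the point $Z := (\gamma_1^{-1} g_\infty) \cdot i \in \mathcal{H}_n$ to produce $\gamma_2 \in \Sp_{2n}(\Z)$ with $\mathrm{Im}(\gamma_2 \cdot Z) > \epsilon_n 1_n$, equivalently $\gamma_2 \gamma_1^{-1} g_\infty \in \mathcal{S}(\epsilon_n)$. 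Since $\gamma_2 \in K_f$, we still have $\gamma_2 k_f \in K_f$, and $\gamma := \gamma_2 \gamma_1^{-1} \in \Sp_{2n}(\Q)$ satisfies $\gamma g \in \mathcal{S}(\epsilon_n) K_f$.

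For the metaplectic version, I would apply the first part to the image $\overline{g} \in \Sp_{2n}(\A)$ of $g$ to obtain $\gamma \in \Sp_{2n}(\Q)$ with $\gamma \overline{g} \in \mathcal{S}(\epsilon_n) K_f$, and view $\gamma$ as an element of $\widetilde{\Sp}_{2n}(\A)$ via the canonical splitting $\Sp_{2n}(\Q) \hookrightarrow \widetilde{\Sp}_{2n}(\A)$ noted earlier. Since $\widetilde{\mathcal{S}}(\epsilon_n)$ and $\widetilde{K}_f$ are by definition the full preimages of $\mathcal{S}(\epsilon_n)$ and $K_f$ in the respective covers, this gives $\gamma g \in \widetilde{\mathcal{S}}(\epsilon_n) \widetilde{K}_f$. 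The only substantive ingredient is strong approximation for $\Sp_{2n}$, which is classical; beyond that, there is no real obstacle, only bookkeeping to ensure that the rational and integral elements from the two steps are combined consistently.
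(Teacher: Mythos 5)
Your proof is correct and follows essentially the same route as the paper: strong approximation to put the finite component in $K_f$, then Theorem \ref{thm:Siegel1} at the archimedean place, using that $\Sp_{2n}(\Z)\subseteq K_f$ so the two steps are compatible. The metaplectic case via the canonical splitting of $\Sp_{2n}(\Q)$ is also exactly what the paper intends.
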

\begin{proof} By approximation, one has $\Sp_{2n}(\A_f) = \Sp_{2n}(\Q) K_f$.  Thus we can write $g = \gamma_1 g_1$ with $g_{1,f} \in K_f$.  By Theorem \ref{thm:Siegel1}, there is $\gamma_2 \in \Sp_{2n}(\Z)$ so that $\gamma_{2,\infty} g_{1,\infty} \in \mathcal{S}(\epsilon_n)$.  The corollary follows.
\end{proof}

\subsection{Quantitative Sturm bound}
Recall that the classical Sturm bound result says that if the first several Fourier coefficients of a holomorphic modular form of some weight $\ell$ are $0$, then the modular form is identitcally $0$.  By a quantitative Sturm bound we mean a result of the form ``if the first several Fourier coefficients of a holomorphic modular form of some weight $\ell$ are at most $\epsilon \geq 0$, then the automorphic function $\varphi$ is bounded by a constant times $\epsilon$".  In particular, all the Fourier coefficients of $\varphi$ are bounded by a constant times $\epsilon$.

We explain the proof of such a result on $\Sp_{2n}$ here.  We begin with a few simple lemmas.
\begin{lemma}\label{lem:TMbound} Suppose $X  \in \R_{> 0}$.  
	\begin{enumerate}
		\item The number of $T \in S_{n}(\Z)^\vee$ positive-definite with $\tr(T) \leq X$ is bounded by $2^{n(n-1)/2} X^{n(n+1)/2}$.
		\item If $M \in \Z_{\geq 1}$, the number of $T \in M^{-1} S_{n}(\Z)^\vee$ positive-definite with $\tr(T) \leq X$ is bounded by $2^{n(n-1)/2} M^{n(n+1)/2} X^{n(n+1)/2}$.
	\end{enumerate}
\end{lemma}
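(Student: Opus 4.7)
The plan is to reduce the count to coordinate-wise bounds coming from positive-definiteness together with the trace constraint.

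For part (1), I would first observe that if $T = (T_{ij}) \in S_n(\Z)^\vee$ is positive-definite, then each diagonal entry $T_{ii}$ is a positive integer, and since the other diagonal entries are positive, $1 \leq T_{ii} \leq \tr(T) \leq X$. This gives at most $X^n$ possible diagonals $(T_{11},\ldots,T_{nn})$. With the diagonal fixed, I would use positive-definiteness of each $2 \times 2$ principal submatrix to obtain $(2T_{ij})^2 < 4 T_{ii} T_{jj} \leq (T_{ii}+T_{jj})^2$ for $i < j$, so the integer $2T_{ij}$ satisfies $|2T_{ij}| < T_{ii}+T_{jj} \leq X$. Hence $2T_{ij}$ takes at most $2X$ integer values. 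Multiplying over the $n(n-1)/2$ off-diagonal entries yields $X^n \cdot (2X)^{n(n-1)/2} = 2^{n(n-1)/2} X^{n(n+1)/2}$, as claimed.

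For part (2), I would reduce to part (1) by rescaling. The map $T \mapsto MT$ is an injection from $\{T \in M^{-1} S_n(\Z)^\vee : T > 0,\ \tr(T) \leq X\}$ into $\{T' \in S_n(\Z)^\vee : T' > 0,\ \tr(T') \leq MX\}$, so part (1) applied with $MX$ in place of $X$ gives the bound $2^{n(n-1)/2}(MX)^{n(n+1)/2} = 2^{n(n-1)/2} M^{n(n+1)/2} X^{n(n+1)/2}$.

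There is no real obstacle here; the lemma is a straightforward counting argument. The powers match because a symmetric $n \times n$ matrix has $n(n+1)/2$ independent entries (each contributing a factor of roughly $X$) and $n(n-1)/2$ off-diagonal entries (each contributing the factor of $2$ from being half-integral rather than integral).
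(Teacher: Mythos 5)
Your proof is correct and follows essentially the same route as the paper: bound each diagonal entry by $X$ using the trace condition, bound each off-diagonal entry to at most $2X$ values using positive-definiteness of the $2\times 2$ principal minors together with AM--GM, and deduce part (2) from part (1) by the rescaling $T \mapsto MT$. No issues.
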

\begin{proof} We prove the first statement of the lemma.  The second follows from the first.  Thus suppose $T \in S_n(\Z)^\vee$ is positive-definite and $\tr(T) \leq X$.  Then each diagonal entry $T_{ij}$ is an integer between $1$ and $X$.  For the off diagonal entries, we have $T_{ii} T_{jj} - \frac{T_{ij}^2}{4} > 0$.  Thus 
	\[|T_{ij}| < 2\sqrt{T_{ii} T_{jj}} \leq T_{ii} + T_{jj}.\]
So, $|T_{ij}| \leq T_{ii}+T_{jj}-1 \leq X-1$.  Thus there at most $2X$ possibilities for $T_{ij}$.
\end{proof}

Suppose $r> 0, N >0$.  Finding the critical point of the function $f(v) = v^N e^{-r v}$ for $v >0$ gives:
\begin{lemma}\label{lem:powerExp} One has $v^N e^{-rv} \leq (N/r)^N e^{-N}$ for $v \geq 0$.
\end{lemma}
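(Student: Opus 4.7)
The plan is a straightforward single-variable calculus optimization, exactly as suggested by the preamble to the lemma. I want to show that the non-negative function $f(v) = v^N e^{-rv}$, viewed on $[0,\infty)$, attains its global maximum at a unique interior critical point, and then to compute the value there.

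First I would record the boundary/asymptotic behavior: $f(0) = 0$ (using $N > 0$ so that $v^N$ vanishes at the origin), and $f(v) \to 0$ as $v \to \infty$ (since $r > 0$, the exponential decay beats the polynomial growth). Since $f$ is continuous and non-negative on $[0,\infty)$ and not identically zero, it attains a positive maximum at some interior point $v_0 > 0$, where $f'(v_0) = 0$. Differentiating gives
\[
f'(v) = N v^{N-1} e^{-rv} - r v^N e^{-rv} = v^{N-1} e^{-rv}(N - rv),
\]
so the only positive root of $f'$ is $v_0 = N/r$. Substituting,
\[
f(N/r) = (N/r)^N e^{-r(N/r)} = (N/r)^N e^{-N},
\]
which is then the global maximum on $[0,\infty)$, proving the stated inequality. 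There is no real obstacle here; the argument is elementary and the only thing to be mildly careful about is the endpoint $v = 0$, which is handled by the observation that $f(0) = 0 \leq (N/r)^N e^{-N}$.
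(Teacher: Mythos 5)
Your proof is correct and follows exactly the route the paper indicates: locating the unique critical point of $f(v) = v^N e^{-rv}$ at $v = N/r$ and evaluating there, with the boundary behavior at $v=0$ and $v\to\infty$ checked. No issues.
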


We now bound some infinite sums.
\begin{lemma} \label{lem:InfSums} Suppose $R > 0$ and $M \in \Z_{\geq 1}$. For $y \in S_n(\R)$ positive-definite and $\ell \geq 0$, define
	\[S_{\ell}(M,y) = \sum_{T \in M^{-1} S_n(\Z)^\vee}{\det(T)^{\ell/2} \det(y)^{\ell/2} e^{-2\pi (T,y)}}\]
and
	\[T_{\ell}(R,M,y) = \sum_{T \in M^{-1} S_n(\Z)^\vee, \det(T) \geq R}{\det(T)^{\ell/2} \det(y)^{\ell/2} e^{-2\pi (T,y)}}.\]
If $1>\mu > 0$ and $y \geq \mu 1_n$, then there are positive constants $D_{\ell,n,\mu}$ and  $\alpha$ so that 
\[S_{\ell}(M,y) \leq D_{\ell,n,\mu} M^{\alpha}\]
for all $M \geq 1$.  Likewise, 
\[T_{\ell}(R,M,y) \leq D_{\ell,n,\mu} M^{\alpha} e^{- \pi \mu n R^{1/n}/2}.\]
\end{lemma}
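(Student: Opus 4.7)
The plan is to dominate the summand pointwise by a $y$-independent exponential in $\tr(T)$, and then count positive-definite $T\in M^{-1}S_n(\Z)^\vee$ of bounded trace by a direct entry-by-entry parametrization. Throughout, all sums are tacitly restricted to positive-definite $T$ (the only terms that contribute).

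For the pointwise bound, since $T$ and $y$ are positive definite, the matrix $Ty$ is conjugate to the positive-definite matrix $y^{1/2}Ty^{1/2}$, so its eigenvalues are positive real numbers. AM-GM on these eigenvalues gives $\det(T)\det(y)=\det(Ty)\leq\bigl((T,y)/n\bigr)^n$, hence
\[\det(T)^{\ell/2}\det(y)^{\ell/2}\leq\bigl((T,y)/n\bigr)^{n\ell/2}.\]
Applying Lemma \ref{lem:powerExp} with $v=(T,y)$, $N=n\ell/2$, and $r=\pi$ absorbs the polynomial factor into $e^{-\pi(T,y)}$, yielding a constant bound $\det(T)^{\ell/2}\det(y)^{\ell/2}e^{-\pi(T,y)}\leq C_{\ell,n}$. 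Since $y\geq\mu 1_n$ implies $(T,y)\geq\mu\tr(T)$, this gives the key estimate
\[\det(T)^{\ell/2}\det(y)^{\ell/2}e^{-2\pi(T,y)}\leq C_{\ell,n}e^{-\pi\mu\tr(T)}.\]

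It then suffices to prove $\sum_{T}e^{-\pi\mu\tr(T)}\leq D_{\mu,n}M^{n(n+1)/2}$. Writing $T=(t_{ij})$, the diagonal entries lie in $M^{-1}\Z_{\geq 1}$ and the off-diagonals in $(2M)^{-1}\Z$; positivity of the $2\times 2$ principal minors forces $|t_{ij}|<\sqrt{t_{ii}t_{jj}}\leq(t_{ii}+t_{jj})/2$, so for fixed diagonal each off-diagonal entry admits $O(M(t_{ii}+t_{jj}))$ choices. Summing first over off-diagonals contributes a factor of order $M^{n(n-1)/2}\prod_{i<j}(t_{ii}+t_{jj})$; expanding this polynomial in the diagonal variables reduces matters to one-dimensional sums $\sum_{t\in M^{-1}\Z_{\geq 1}}t^ke^{-\pi\mu t}$, each of size $O_{\mu,k}(M)$ by comparison with $\int_0^\infty x^ke^{-\pi\mu x}\,dx$. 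Multiplying $n$ such one-dimensional bounds gives $O_{\mu,n}(M^n)$, and combining yields the claimed $M^{n(n+1)/2}$, so one may take $\alpha=n(n+1)/2$.

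For $T_{\ell}(R,M,y)$, AM-GM applied to the eigenvalues of $T$ alone gives $\tr(T)\geq n\det(T)^{1/n}\geq nR^{1/n}$ whenever $\det(T)\geq R$. Splitting $e^{-\pi\mu\tr(T)}=e^{-\pi\mu\tr(T)/2}\cdot e^{-\pi\mu\tr(T)/2}$, the first factor is uniformly bounded by $e^{-\pi\mu nR^{1/n}/2}$ on the relevant range, and the second is summed exactly as above with $\mu/2$ in place of $\mu$, producing the same polynomial growth in $M$ together with the required exponential decay in $R$. The only real obstacle is keeping the elementary counting tidy; the analytic core is AM-GM combined with Lemma \ref{lem:powerExp}.
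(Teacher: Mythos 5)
Your argument is correct and follows essentially the same route as the paper: the same AM-GM bound $\det(Ty)\leq ((T,y)/n)^n$ combined with Lemma \ref{lem:powerExp} to reduce everything to $\sum_T e^{-\pi\mu\tr(T)}$, and the same $2\times 2$-minor observation $|t_{ij}|<\sqrt{t_{ii}t_{jj}}\leq (t_{ii}+t_{jj})/2$ that underlies the paper's counting lemma (Lemma \ref{lem:TMbound}). The only difference is bookkeeping — you factor the lattice sum entry by entry and compare with one-dimensional integrals, while the paper counts $T$ in shells of bounded trace and sums over shells — and both yield the stated $M^{\alpha}$ growth and the $e^{-\pi\mu nR^{1/n}/2}$ decay for the tail.
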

\begin{proof} First note that, by the AM-GM inequality,
	\begin{equation} \label{eqn:AMGM}\det(Ty) = \det(y^{1/2} T y^{1/2}) \leq \left(\frac{1}{n}\tr(y^{1/2}Ty^{1/2})\right)^{n} = n^{-n} (T,Y)^n.\end{equation}
Thus $\det(TY)^{\ell/2} e^{-\pi(T,Y)} \leq n^{-n\ell/2} (T,Y)^{\ell n/2} e^{-\pi(T,Y)}$, which by Lemma \ref{lem:powerExp} is bounded by 
\[C_{\ell,n}:=\left(\frac{\ell n}{2 \pi}\right)^{\ell n /2} n^{-n\ell/2} e^{-\ell n /2}.\]
Thus 
\[S_{\ell}(M,y) \leq C_{\ell,n}  \sum_{T \in M^{-1} S_n(\Z)^\vee}{ e^{-\pi (T,y)}} \leq C_{\ell,n} \sum_{T \in M^{-1} S_n(\Z)^\vee}{ e^{-\pi \mu \tr(T)}}.\]
Applying Lemma \ref{lem:TMbound}, one obtains
\[S_{\ell}(M,y) \leq C_{\ell,n} 2^{n(n-1)/2} \sum_{k \geq 1}{ k^{n(n+1)/2} e^{-\pi \mu k/M}}.\]
Applying Lemma \ref{lem:powerExp} again, summing a geometric series, and applying the inequality $\frac{e^{-r}}{1-e^{-r}} \leq r^{-1}$ for $r > 0$ gives the bound on $S_{\ell}(M,y)$.

To bound $T_{\ell}(R,M,y)$, first note that if $\det(T) \geq R$ and $y > \mu 1_n$, then using \eqref{eqn:AMGM}, one sees that $(T,y)$ is bounded below by $n R^{1/n}\mu$.  Thus, to bound $T_{\ell,M,y}$ it suffices to bound 
\[\sum_{k \geq R^{1/n} n M}{ k^{n(n+1)/2} e^{-\pi \mu k/M}}.\]
Arguing as above, one gets the claim.
\end{proof}

If $\varphi$ corresponds to a Siegel modular form of weight $\ell$ with Fourier coefficients $a_T(g_f)$, let $\beta_T(g_f) = \det(T)^{-\ell/2} a_T(g_f)$ be the normalized Fourier coefficients.
\begin{lemma}\label{lem:FCbound} Suppose $|\varphi(g)| \leq L$ for all $g \in \Sp_{2n}(\A)$.  Then $|\beta_T(g_f)| \leq e^{2\pi n} L$ for all $T > 0$ and all $g_f \in \Sp_{2n}(\A_f)$.
\end{lemma}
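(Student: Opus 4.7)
The plan is to extract $a_{T_0}(g_f)$ from $\varphi$ by standard Fourier inversion on the Siegel unipotent radical $n(S_n)$, evaluated at a carefully chosen archimedean point, and then to bound it directly using $|\varphi| \leq L$. This is essentially Hecke's Fourier coefficient bound adapted to the Siegel setting.

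First I would fix a positive-definite $y \in S_n(\R)$ (to be specialized at the end) and consider the archimedean element $g_\infty = n(X) m_n(y^{1/2})$ for $X \in S_n(\R)$. A short computation gives $g_\infty \cdot i = X+iy$ and $j(g_\infty,i) = \det(y)^{-1/2}$, so $\mathcal{W}_{\ell,T}(g_\infty) = \det(y)^{\ell/2} e^{2\pi i(X,T)} e^{-2\pi(y,T)}$, and the Fourier expansion \eqref{eqn:SMFFE} reads
\[ \varphi(g_f \cdot n(X) m_n(y^{1/2})) = \det(y)^{\ell/2} \sum_{T > 0} a_T(g_f) e^{2\pi i(X,T)} e^{-2\pi(y,T)}. \]
Because $\varphi$ is right-invariant under a compact open subgroup of $\Sp_{2n}(\A_f)$, the relation $a_T(n(Y)g_f) = \psi((T,Y))a_T(g_f)$ forces the support of $\{T : a_T(g_f) \neq 0\}$ to lie in some lattice $M^{-1} S_n(\Z)^\vee$ with $M \in \Z_{\geq 1}$; hence the right-hand side is periodic in $X$ under the dual lattice $\Lambda \subset S_n(\R)$.

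Next, Fourier inversion on the compact torus $S_n(\R)/\Lambda$ picks off a single coefficient: for any positive-definite $T_0$ in the support,
\[ a_{T_0}(g_f) \det(y)^{\ell/2} e^{-2\pi(y,T_0)} = \frac{1}{\mathrm{vol}(S_n(\R)/\Lambda)} \int_{S_n(\R)/\Lambda} \varphi(g_f \cdot n(X) m_n(y^{1/2})) e^{-2\pi i(X,T_0)} \, dX. \]
Since $|\varphi| \leq L$ and $|e^{-2\pi i(X,T_0)}| = 1$, the right side has absolute value at most $L$, giving
\[ |a_{T_0}(g_f)| \leq L \, \det(y)^{-\ell/2} e^{2\pi(y,T_0)}. \]

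Finally I would optimize by taking $y = T_0^{-1}$, which yields $(y,T_0) = \tr(T_0^{-1} T_0) = n$ and $\det(y) = \det(T_0)^{-1}$. Substituting gives $|a_{T_0}(g_f)| \leq L \det(T_0)^{\ell/2} e^{2\pi n}$, which is exactly the desired inequality $|\beta_{T_0}(g_f)| \leq e^{2\pi n} L$. No step is genuinely an obstacle; the only housekeeping is verifying the periodicity lattice and confirming that $T_0$ lies in its dual, both of which follow immediately from uniform smoothness and the shape of the Fourier expansion.
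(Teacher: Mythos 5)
Your proof is correct and is essentially the paper's argument written out in full: the paper also extracts $a_T(g_f)$ by integration over the unipotent radical and then specializes to $g_\infty = m_n(T^{-1/2})$, which is exactly your choice $y = T^{-1}$ giving $(y,T) = n$ and the factor $e^{2\pi n}$. No substantive difference.
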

\begin{proof} This is standard.  From $|\varphi(g)| \leq L$ for all $g$, one obtains by integration that 
	\[|\beta_T(g_f)| \det(T)^{\ell/2} |\mathcal{W}_{\ell,T}(g_\infty)| \leq L\]
	 for all $g_f$, all $g_\infty$.   Letting $g_\infty = m_n(T^{-1/2})$ gives the lemma.
\end{proof}

Here is the quantitative Sturm bound.
\begin{theorem}[Quantitative Sturm bound] Suppose $\varphi(g)$ is a cuspidal automorphic form on $\Sp_{2n}(\A)$ or its double cover corresponding to a holomorphic modular form of weight $\ell \geq 0$.  Let $M \geq 1$ be a positive integer so that $\beta_T(k) \neq 0$ implies $T \in M^{-1} S_n(\Z)^\vee$ for all $T$ and all $k \in K_f = \Sp_{2n}(\widehat{\Z})$ or $\widetilde{K}_f$.  Suppose $\epsilon \geq 0$ is such that that the normalized Fourier coefficients $\beta_T(g_f)$ satisfy $|\beta_T(k)| \leq \epsilon$ for all $k \in K_f$ and all $T$ with $\det(T) \leq R$, where
	\[R^{1/n} = \frac{2}{\pi n \epsilon_n} \log(2 e^{2\pi n} D_{\ell,n,\epsilon_n} M^\alpha).\]
Then 
\[|\varphi(g)| \leq \epsilon \left(2 e^{2\pi n} D_{\ell,n,\epsilon_n} M^{\alpha}\right).\]
In particular, 
\[|\beta_T(g_f)| \leq \epsilon \cdot \left(2 e^{4\pi n} D_{\ell,n,\epsilon_n} M^{\alpha}\right)\]
for all $T$ and all $g_f \in \Sp_{2n}(\A_f)$.
\end{theorem}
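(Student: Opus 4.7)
The plan is a bootstrap. Since $\varphi$ is cuspidal, the sup-norm $L := \sup_g |\varphi(g)|$ is finite. I will show that $|\varphi(g)| \leq \epsilon D_{\ell,n,\epsilon_n} M^\alpha + L/2$ at every point $g$, and then solve for $L$.

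By reduction theory (Corollary \ref{cor:adelicRedTh}) and the left $\Sp_{2n}(\Q)$-invariance of $\varphi$, it suffices to bound $|\varphi(g)|$ for $g = g_f g_\infty$ with $g_f \in K_f$ (respectively $\widetilde{K}_f$) and $g_\infty \in \mathcal{S}(\epsilon_n)$ (respectively $\widetilde{\mathcal{S}}(\epsilon_n)$). For such $g$, set $Y = \mathrm{Im}(g_\infty \cdot i)$, so that $Y \geq \epsilon_n 1_n$. Using the standard identity $|j(g_\infty, i)|^{-\ell} = \det(Y)^{\ell/2}$ (and its metaplectic analogue via $|j_{1/2}(g_\infty,i)|^2 = |j(g_\infty,i)|$), the Fourier expansion yields
\[|\varphi(g)| \leq \sum_{T > 0,\, T \in M^{-1} S_n(\Z)^\vee} |\beta_T(g_f)|\, \det(TY)^{\ell/2}\, e^{-2\pi (T, Y)}.\]

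Next, split the sum at $\det(T) = R$. For the head ($\det(T) \leq R$), apply the hypothesis $|\beta_T(g_f)| \leq \epsilon$, which is available precisely because reduction theory bought us $g_f \in K_f$; extending the remaining sum to all positive $T$ and invoking the $S_\ell$ bound of Lemma \ref{lem:InfSums} yields a contribution of at most $\epsilon D_{\ell, n, \epsilon_n} M^\alpha$. For the tail ($\det(T) > R$), use the universal bound $|\beta_T(g_f)| \leq e^{2\pi n} L$ from Lemma \ref{lem:FCbound}, together with the $T_\ell$ bound of Lemma \ref{lem:InfSums}. The choice of $R$ has been calibrated so that
\[e^{-\pi \epsilon_n n R^{1/n}/2} = \bigl(2 e^{2\pi n} D_{\ell, n, \epsilon_n} M^\alpha\bigr)^{-1},\]
making the tail contribution exactly $L/2$. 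Adding the two contributions gives $|\varphi(g)| \leq \epsilon D_{\ell, n, \epsilon_n} M^\alpha + L/2$, as promised.

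Taking the supremum over $g$ yields the closed bootstrap $L \leq 2\epsilon D_{\ell, n, \epsilon_n} M^\alpha$, which a fortiori gives the claimed pointwise bound $|\varphi(g)| \leq \epsilon(2 e^{2\pi n} D_{\ell, n, \epsilon_n} M^\alpha)$. Inserting this bound into Lemma \ref{lem:FCbound} then yields the claimed estimate on $|\beta_T(g_f)|$ for arbitrary $g_f$. The only delicate point is the one already highlighted: the bootstrap makes sense only because $L < \infty$ a priori, which is exactly why the theorem is stated for cuspidal forms. The remaining ingredients (the identity $|j(g_\infty,i)|^{-\ell} = \det(Y)^{\ell/2}$ on the Siegel upper half-space, and the observation that conjugating $g$ by an element of $\Sp_{2n}(\Q)$ into the Siegel set still places $g_f$ in $K_f$ so that the hypothesis on $\beta_T$ remains applicable) are routine given the setup of Sections 2 and 3.
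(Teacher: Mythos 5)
Your proof is correct and follows essentially the same route as the paper: reduce to the Siegel set via Corollary \ref{cor:adelicRedTh}, split the Fourier expansion at $\det(T)=R$, bound the head by the hypothesis together with $S_\ell$ and the tail by Lemma \ref{lem:FCbound} together with $T_\ell$, and close the bootstrap using the calibration of $R$. The only cosmetic difference is that you keep the sharper head bound $\epsilon D_{\ell,n,\epsilon_n}M^\alpha$ (the paper factors out $e^{2\pi n}$ uniformly), so your conclusion is a fortiori the stated one.
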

\begin{proof} We write out the proof in the case of the linear group $\Sp_{2n}$.  The proof in the case of the double cover is identical.
	
Let $g_{*} \in \Sp_{2n}(\A)$ be such that $|\varphi(g)|$ attains its maximum at $g_{*}$.  Let $L = |\varphi(g_*)|$.  By Corollary \ref{cor:adelicRedTh}, we can assume $g_{*} = g_{\infty} k \in \mathcal{S}(\epsilon_n) K_f$.  Write $y = Im(g_\infty \cdot i)$.  By Lemma \ref{lem:FCbound}, we obtain
\begin{align*}
L = |\varphi(g_{*})|  &\leq \sum_{T \in M^{-1} S_n(\Z)^\vee, T > 0}{|\beta_T(k)| \det(Ty)^{\ell/2} e^{-2\pi(T,y)}} \\
&\leq e^{2\pi n}( \epsilon S_{\ell}(M,y) + L T_{\ell}(R,M,y)).
\end{align*}
Applying Lemma \ref{lem:InfSums} gives
\[ L \leq e^{2\pi n} D_{\ell,n,\epsilon_n} M^\alpha (\epsilon + L e^{-\pi \epsilon_n n R^{1/n}/2}).\]
The constant $R$ is chosen so that 
\[e^{2\pi n} D_{\ell,n,\epsilon_n} M^\alpha e^{-\pi \epsilon_n n R^{1/n}/2} = \frac{1}{2}.\]
Rearranging the inequality gives the theorem.
\end{proof}

\section{Automatic convergence}
In this section, we prove Theorem \ref{thm:autConv}, the automatic convergence theorem.  We will need some prepatory lemmas.

\subsection{Prepatory lemmas}
	\begin{lemma}\label{lem:volBound1} Let $U_{r,g} \subseteq X(\A_f)$ be an open compact subset such that $a_{\diag(t,T')}(xrg) \neq 0$ implies $x \in U_{r,g}$.  Let $B$ be a positive real number so that $|a_{\diag(t,T')}(xrg)| \leq B$ for all $x \in X(\A_f)$.  Then $|a_{t,T'}(r,g,\phi)| \leq ||\phi||_{L^2} \cdot B \cdot \mathrm{vol}(U_{r,g})^{1/2}$.
\end{lemma}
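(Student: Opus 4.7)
The plan is straightforward: bound the defining integral of $a_{t,T'}(r,g,\phi)$ by Cauchy--Schwarz, using the support condition to restrict to $U_{r,g}$ and the unitarity of the Weil representation to handle the $\phi$-factor.

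First I would recall the definition
\[ a_{t,T'}(r,g;\phi) = \int_{X(\A_f)} a_{\diag(t,T')}(xrg)\,(\omega_{-t}(r)\phi)(x)\,dx. \]
The hypothesis that $a_{\diag(t,T')}(xrg) \neq 0$ implies $x \in U_{r,g}$ lets me truncate the domain of integration to the compact open set $U_{r,g}$ without changing the value of the integral. Then I would apply the Cauchy--Schwarz inequality on $L^2(U_{r,g})$ to split the integrand into the Whittaker factor and the Schwartz factor.

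For the Whittaker factor, the pointwise bound $|a_{\diag(t,T')}(xrg)| \leq B$ gives
\[ \left(\int_{U_{r,g}} |a_{\diag(t,T')}(xrg)|^2\,dx\right)^{1/2} \leq B\cdot\mathrm{vol}(U_{r,g})^{1/2}. \]
For the Schwartz factor, I would extend the integration back to all of $X(\A_f)$ (the integrand is nonnegative) and use that $\omega_{-t}(r)$ acts unitarily on $L^2(X(\A_f))$, so
\[ \left(\int_{U_{r,g}} |(\omega_{-t}(r)\phi)(x)|^2\,dx\right)^{1/2} \leq \|\omega_{-t}(r)\phi\|_{L^2} = \|\phi\|_{L^2}. \]
Multiplying these two bounds gives the claimed inequality.

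There is no serious obstacle; the only point requiring a moment of care is confirming that the Weil representation is unitary on $L^2(X(\A_f))$ (which is part of its construction) so that the $L^2$-norm of $\phi$ is preserved by $\omega_{-t}(r)$.
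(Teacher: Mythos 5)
Your proof is correct and is exactly the intended argument: the paper defers to \cite[Lemma 13.4]{pollackAutConvExc}, which is this same computation --- truncate to the support $U_{r,g}$, apply Cauchy--Schwarz, and use unitarity of $\omega_{-t}$ to replace $\|\omega_{-t}(r)\phi\|_{L^2}$ by $\|\phi\|_{L^2}$. Nothing further is needed.
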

\begin{proof} The proof is identical to \cite[Lemma 13.4]{pollackAutConvExc}.
\end{proof}

Conversely, we can bound the $a_{\diag(t,T')}(xg)$ in terms of the $a_{t,T'}(r,g,\phi)$.  
\begin{lemma}\label{lem:abarfromFJ} Suppose $B_{t,T',g}'> 0$ is a constant so that $|a_{t,T'}(1,g,\phi)| \leq B_{t,T',g}' \cdot ||\phi||_{L^2}$ for all $\phi \in S(X(\A_f))$.  Suppose $V_{t,T',g} \subseteq X(\A_f)$ is a compact open subgroup with the property that $a_{\diag(t,T')}(x v g)  = a_{\diag(t,T')}(x g) $ if $v \in V_{t,T',g}$ and $x \in X(\A_f)$.  Then $|a_{\diag(t,T')}(xg)| \leq B_{t,T',g}' \cdot \mathrm{vol}(V_{t,T',g})^{-1/2}$.
\end{lemma}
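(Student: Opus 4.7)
The plan is to pick $\phi$ to be the characteristic function of a single coset and read off the pointwise bound from the hypothesis. Concretely, I would fix an arbitrary point $x_0 \in X(\A_f)$, set $V = V_{t,T',g}$, and take $\phi = \mathbf{1}_{x_0 V}$. Specializing the integral defining $a_{t,T'}(r,g,\phi)$ at $r = 1$ (so that the Weil representation factor is trivial, $(\omega_{-t}(1)\phi)(x) = \phi(x)$), one gets
\[ a_{t,T'}(1,g,\phi) = \int_{X(\A_f)} a_{\diag(t,T')}(xg)\, \phi(x)\,dx = \int_{x_0 V} a_{\diag(t,T')}(xg)\,dx. \]

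The second step is to exploit the invariance hypothesis. Writing $x = x_0 v$ with $v \in V$, the assumption $a_{\diag(t,T')}(x v g) = a_{\diag(t,T')}(xg)$ for $v \in V$ makes the integrand constant on $x_0 V$, so the right-hand side equals $a_{\diag(t,T')}(x_0 g) \cdot \mathrm{vol}(V)$. Simultaneously, $\|\phi\|_{L^2}^2 = \mathrm{vol}(V)$, so $\|\phi\|_{L^2} = \mathrm{vol}(V)^{1/2}$.

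The last step is to feed this into the given bound $|a_{t,T'}(1,g,\phi)| \leq B'_{t,T',g} \cdot \|\phi\|_{L^2}$, yielding
\[ |a_{\diag(t,T')}(x_0 g)| \cdot \mathrm{vol}(V) \leq B'_{t,T',g} \cdot \mathrm{vol}(V)^{1/2}, \]
and then dividing through by $\mathrm{vol}(V)$ gives $|a_{\diag(t,T')}(x_0 g)| \leq B'_{t,T',g} \cdot \mathrm{vol}(V)^{-1/2}$. Since $x_0$ was arbitrary, this is the desired pointwise bound.

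There is essentially no obstacle here; the only real check is that the Weil representation $\omega_{-t}$ indeed acts trivially at the identity (so that $\phi$ itself appears in the integrand) and that $V_{t,T',g}$ acts on $X(\A_f)$ by right translation in a manner compatible with the invariance hypothesis as stated. Both are immediate, so the argument is a short testing-against-a-characteristic-function calculation, exactly dualizing Lemma \ref{lem:volBound1}.
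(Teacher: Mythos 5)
Your argument is correct and is the standard "test against the characteristic function of a coset" computation; the paper defers its proof to \cite[Lemma 13.5]{pollackAutConvExc}, which proceeds in essentially the same way. The only hypotheses you implicitly use — that $\mathbf{1}_{x_0 V} \in S(X(\A_f))$ and that the Haar measure on the (abelian, unipotent) group $X(\A_f)$ gives the coset $x_0 V$ the same volume as $V$ — are immediate, so there is nothing to fix.
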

\begin{proof}  The proof is identical to \cite[Lemma 13.5]{pollackAutConvExc}.
\end{proof}

\subsection{Proof of automatic convergence}
To prove that the $a_T(g_f)$ grow polynomially for all $g_f$, it suffices to check it for $g_f \in K_f$.  Indeed, this follows from the $P$ symmetries and from the decomposition $\Sp_{2n}(\A_f) = N_P(\A_f) M_P(\Q) K_f$, where $P = M_P N_P$ is the Siegel parabolic subgroup.

To prove the polynomial growth, we will proceed by induction.  To set up the induction, let $\delta > 1$ be a positive real number, to be determined, and let $D_0 > 0$ be a sufficiently large real number.  If $T \in S_{n}(\Q)$ is positive-definite, then $\det(T) \leq D_0^{\delta^n}$ for some $n = 0,1,2,\ldots$.  We will show that, there are positive constants $Q, E >0$ so that if $D_0^{\delta^{n-1}} \leq \det(T) \leq D_0^{\delta^n}$, then 
\[|\beta_T(k)| \leq Q (1 \cdot D_0 \cdot D_0^\delta \cdots D_0^{\delta^{n-1}})^E = Q D_0^{E \cdot \frac{\delta^{n}-1}{\delta -1}} \leq Q \det(T)^{ \delta E/(\delta-1)}.\]
We will prove the first inequality by induction on $n$.  The second inequality shows that the growth is polynomial.

To begin, first note that there is a lattice $\Lambda \subseteq S_n(\Q)$ so that $a_T(k) \neq 0$ and $k \in K_f$ implies $T \in \Lambda$.  Indeed, this follows from the right $U$-invariance of the $a_T$'s and the fact that they are Siegel Whittaker functions.  See, e.g., \cite[Lemma 9.1]{pollackAutConvExc}.  Without loss of generality, we can assume that $U$ is normal in $K_f$ and $\Lambda$ is $\Gamma_U := \SL_n(\Q) \cap U$ invariant.  There are only finitely many $\Gamma_U$ orbits on the elements $T \in \Lambda$ with $\det(T)$ bounded.  See, e.g., \cite[Lemma 13.2]{pollackAutConvExc}.  Moreover, $|a_{T \cdot \gamma}(k)| = |a_{T}(k)|$ if $\gamma \in \Gamma_U$.  Thus, the base case of our induction can be satisfied for any $D_0$, for some $Q$ depending on $D_0$.

We now proceed with the induction step.  For ease of notation, if $n \geq 1$ and $D_0^{\delta^{n-1}} \leq D < D_0^{\delta^n}$, we set
\[ f(D) = Q (1 \cdot D_0 \cdot D_0^\delta \cdots D_0^{\delta^{n-1}})^E = Q D_0^{E \cdot \frac{\delta^{n}-1}{\delta -1}}.\]
We suppose that $|\beta_T(k)| \leq Q f(\det(T))$ for all $T$ with $\det(T) < D_0^{\delta^n}$.  

\begin{claim}\label{claim:Apply} Let $D_1 = D_0^{\delta^n}$.  Suppose $\epsilon > 0$ is small, $D_0$ is sufficiently large, and $t < D_1^{1-\epsilon}$.  Suppose $x \in X(\A_f)$.  Then there are positive constants $C'$ and $\alpha$ so that $|a_{\diag(t,T')}(xk)| \leq Q C'  \det(T')^{\ell/2} t^{\alpha'} f(D_1)$ for all $k \in K_f$.
\end{claim}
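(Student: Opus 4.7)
The plan is to use the $Q$-symmetries to reduce bounding $a_{\diag(t,T')}(xk)$ to bounds on Fourier coefficients $a_T$ with $\det(T) < D_1$, where the induction hypothesis applies. First, apply Lemma \ref{lem:abarfromFJ} with $g = k$ to convert bounding $|a_{\diag(t,T')}(xk)|$ uniformly in $x$ to bounding $|a_{t,T'}(1, k, \phi)|$ as $\phi$ ranges over $S(X(\A_f))$; the invariance volume $\mathrm{vol}(V_{t,T',k})$ is bounded below uniformly by the hypothesized uniform right $U$-invariance of the $a_T$. Next, by the $Q$-symmetries, $\{a_{t,T'}(r_f, k, \phi)\}_{T'}$ are the Fourier coefficients of a cuspidal weight $\ell-\tfrac{1}{2}$ modular form $\Psi_{t,k,\phi}$ on $\widetilde{\Sp}_{2n-2}$; apply the Quantitative Sturm Bound to $\Psi_{t,k,\phi}$. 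The Sturm cutoff $R'$ depends only on fixed data (some integer $M'$, $\ell-\tfrac{1}{2}$, $n-1$, $\epsilon_{n-1}$) and is thus bounded independently of $D_0$ and $t$.

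To feed the Sturm bound, one must bound $|\beta_{T'}(k')|$ for $T'$ with $\det(T') \leq R'$ and $k' \in \widetilde{K}_f$. By Lemma \ref{lem:volBound1}, this reduces to bounding $|a_{\diag(t,T')}(x' k' k)|$ for $x' \in X(\A_f)$. Here comes the key maneuver: using strong approximation on the abelian unipotent $X \simeq \mathbb{G}_a^{n-1}$, decompose $x' = x'_\Q \cdot x'_{\widehat{\Z}}$ with $x'_\Q \in X(\Q)$ and $x'_{\widehat{\Z}} \in X(\widehat{\Z}) \subset K_f$. Since $X \subset M_P$ and $\det(x'_\Q) = 1$, the $P$-symmetries yield
\[
a_{\diag(t,T')}(x' k' k) = a_{\diag(t,T') \cdot x'_\Q}\bigl(x'_{\widehat{\Z}} k' k\bigr),
\]
in which the transformed index has determinant $t \det(T')$ and the argument lies in $K_f$. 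For $t < D_1^{1-\epsilon}$ and $\det(T') \leq R'$, taking $D_0$ sufficiently large forces $t \det(T') \leq D_1^{1-\epsilon} R' < D_1$, so the induction hypothesis bounds $|\beta_{\diag(t,T') \cdot x'_\Q}|$ by $Q f(D_1)$. Thus $|a_{\diag(t,T')}(x' k' k)| \leq (t \det(T'))^{\ell/2} Q f(D_1)$, uniformly in $x'$.

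Unwinding, Lemma \ref{lem:volBound1} now yields a bound of the shape $C_0 t^{\ell/2} Q f(D_1) \|\phi\|_{L^2}$ for $|a_{t,T'}(k', k, \phi)|$ when $\det(T') \leq R'$, the Quantitative Sturm Bound extends this to all $T'$ (with an additional factor $\det(T')^{(\ell-1/2)/2}$ from the weight normalization), and Lemma \ref{lem:abarfromFJ} transports it back to the desired estimate $|a_{\diag(t,T')}(xk)| \leq Q C' \det(T')^{\ell/2} t^{\alpha'} f(D_1)$ with $\alpha'$ essentially $\ell/2$, absorbing the $\det(T')^{-1/4}$ shift and various uniform constants (using that half-integral positive-definite $T'$ have $\det(T')$ bounded below). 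The main obstacle is the reduction step converting $a_{\diag(t,T')}(x' k' k)$ to a $K_f$-evaluation. What makes it go through is a threefold coincidence: $X \subset M_P$ (so the $P$-symmetries act on $x'_\Q$), $X$ is unipotent in $\GL_n$ (so $\det(x'_\Q) = 1$ preserves the determinant, keeping the bookkeeping uniform), and $X$ is abelian (so strong approximation cleanly decomposes $X(\A_f) = X(\Q) \cdot X(\widehat{\Z})$). The metaplectic cover in the half-integral case is a technical but standard modification.
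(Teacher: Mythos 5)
Your overall architecture matches the paper's: pass from $a_{\diag(t,T')}$ to the Fourier--Jacobi coefficients $a_{t,T'}(\cdot,k,\phi)$ via Lemmas \ref{lem:volBound1} and \ref{lem:abarfromFJ}, feed the induction hypothesis into the quantitative Sturm bound for the weight $\ell-\frac12$ form on $\widetilde{\Sp}_{2n-2}$, and transport the resulting bound back. Your explicit maneuver for bounding $a_{\diag(t,T')}(x'k'k)$ uniformly in $x'\in X(\A_f)$ --- strong approximation $X(\A_f)=X(\Q)X(\widehat\Z)$ plus the $P$-symmetries, using that $X\subset M_P$ is unipotent so the determinant of the index is preserved --- is a correct and welcome expansion of a step the paper leaves implicit.

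However, there is one genuine error: you assert that the lattice constant $M'$ for the Fourier--Jacobi form, and hence the Sturm cutoff $R'$, is ``bounded independently of $D_0$ and $t$.'' This is false, and the paper says so explicitly: ``The constant $M$ of the quantitative Sturm bound can be taken to be proportional to $t$.'' The Weil representation $\omega_{-t}$ depends on $t$, so the level of the index-$t$ theta decomposition, and the denominators of the lattice in $S_{n-1}(\Q)$ supporting the coefficients $a_{t,T'}$, grow with $t$. Two things downstream depend on this. First, the cutoff satisfies $R'^{1/(n-1)} \asymp \log(\mathrm{const}\cdot t^{\alpha})$, so $R'=O((\log t)^{n-1})$ rather than $O(1)$; the inequality $t\det(T')<D_1$ on the Sturm range still holds for $D_0$ large because $t<D_1^{1-\epsilon}$ and the cutoff is only polylogarithmic in $t$ (this is exactly the paper's condition $\det(T')<(a_1+b_1\log t)^{n-1}$), but your justification as written does not establish it. Second, the Sturm bound's output carries the factor $M^{\alpha}\propto t^{\alpha}$, which is the principal source of the $t^{\alpha'}$ in the claim --- not merely the $(t\det T')^{\ell/2}$ normalization as you suggest; the lower bound on $\det(T')$ you invoke at the end also degrades with $t$ for the same reason. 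None of this breaks the argument, since the claim tolerates an arbitrary positive power of $t$, but the $t$-dependence of $M'$ must be tracked rather than denied.
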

\begin{proof}
We apply the quantitative Sturm bound for cusp forms of weight $\ell-\frac{1}{2}$ on $\widetilde{\Sp}_{2n-2}(\A)$.  Fix $\phi \in S(X(\A_f))$.  The Fourier coefficients of our cusp form are 
\[a_{t,T'}(k_1,k,\phi) = \int_{X(\A_f)}{\omega(k_1)\phi(x) a_{\diag(t,T')}(xk_1 k)\,dx}.\]
The constant $M$ of the quantitative Sturm bound can be taken to be proportional to $t$.  Applying Lemma \ref{lem:volBound1}, we see that the normalized Fourier coefficients are bounded by a constant times $||\phi||^2 t^{\ell/2} \det(T')^{1/2} f(D_1)$ if $t \det(T') < D_1$.  As $t < D_1^{1-\epsilon}$, we see that if $\det(T') < (a_1 + b_1 \log(t))^{n-1}$ then $t \det(T') < D_1$.  Here $a_1, b_1$ are fixed postive constants.  We can thus apply the quantitative Sturm bound.  Applying it, we find
\[ |a_{t,T'}(r,k,\phi)| \leq Q C ||\phi||^2 \det(T')^{\ell/2} t^{\alpha'} f(D_1)\]
for some positive constants $C$ and $\alpha'$.  We now apply Lemma \ref{lem:abarfromFJ} to finish the proof.
\end{proof}

Suppose now $T \in S_n(\Q)$ and $D_1 = D_0^{\delta^n} \leq \det(T) \leq D_1^{\delta}$.  We wish to bound $|\beta_{T}(k)|$ for $k \in K_f$.  By Theorem \ref{thm:minkowski1}, we can assume $t = T_{11} \leq C_n D_1^{\delta/n}$.  Thus, if $1 < \delta < n$, $t < D_1^{1-\epsilon}$, so we can apply Claim \ref{claim:Apply}.  We obtain $|\beta_T(k)| \leq Q D_1^E f(D_1)$ for some positive number $E> 0$.  This completes the induction, and with it, the automatic convegence theorem.

	\bibliography{nsfANT2020new}
	\bibliographystyle{amsalpha}
\end{document}